\theoremstyle{plain}
\newtheorem{thm}{\bf Theorem}[subsection]
\newtheorem{lem}[thm]{\bf Lemma}
\newtheorem{cor}[thm]{\bf Corollary}
\newtheorem{prop}[thm]{\bf Proposition}
\theoremstyle{definition}
\newtheorem{nota}[thm]{\bf Notations}
\newtheorem{rem}[thm]{\bf Remark}
\newtheorem{example}[thm]{\bf Example}
\theoremstyle{definition}
\newtheorem{defn}[thm]{\bf Definition}
\newcommand{\bbF}{{\mathbb F}}
\newcommand{\bbQ}{{\mathbb Q}}
\newcommand{\cT}{{\mathcal T}}
\newcommand{\sH}{{\mathscr H}}
\newcommand{\sR}{{\mathscr R}}
\newcommand{\Frob}{{\rm Frob}}
\newcommand{\lsub}[2]{\prescript{}{#1}{#2}}
\title{Generic vanishing on homogeneous spaces in arbitrary characteristic}
\author{
  Ankit Rai\\
  \small Department of Mathematics, University at Buffalo, Buffalo, NY 14260\\
  \small \texttt{arai22@buffalo.edu}
  \and
  K.\,V. Shuddhodan\\
  \small Department of Mathematics, University of Notre Dame, Notre Dame, IN 46556\\
  \small \texttt{skadattu@nd.edu}
}
\date{} % empty date
\begin{document}
\maketitle

\begin{abstract}

Let $X$ be a proper homogeneous space for a connected algebraic group
$G$ over an algebraically closed field. For locally closed smooth affine subvarieties
$W,Z\subset X$, we show that 
\[
(-1)^{\dim X-\dim W+\dim Z}\chi(gW\cap Z)\geq 0
\]
for generic $g\in G$. This extends the characteristic-zero theorem of Sch\"urmann--Simpson--Wang.
Over finite fields, our methods give a trace-function identity
on a dense open subset of $G$ and a Lang--Weil estimate for the
non-generic locus.

\end{abstract}

%%%%%%%%%%%%%%%%%%%%%%%%%%%%%%%%%%%%%%%%%%%%%%%%%%%%%%%%%%%%%%%%%%%%%%%%%%%%%%%%%%%%%%%
% \bigskip                                                                            %
% \noindent\textbf{Keywords:} Generic vanishing, Perverse sheaves, Homogeneous spaces %
%                                                                                     %
% \noindent\textbf{Mathematics Subject Classification (2020):} 14F08, 14F20, 14G17    %
%%%%%%%%%%%%%%%%%%%%%%%%%%%%%%%%%%%%%%%%%%%%%%%%%%%%%%%%%%%%%%%%%%%%%%%%%%%%%%%%%%%%%%%

\numberwithin{equation}{section}
%\setcounter{tocdepth}{1}
%\tableofcontents

\section{Introduction}
\label{sec:introduction}

Let $X$ be a projective homogeneous space for a connected algebraic
group $G$, and let $W,Z\subset X$ be locally closed smooth affine
subvarieties. For a generic translate $gW$, Sch\"urmann--Simpson--Wang
\cite{schurmann2025new} proved in characteristic zero that
\[
(-1)^{\dim(gW\cap Z)}\chi(gW\cap Z)\geq 0.
\]
They obtain this sign statement from a generic vanishing theorem for
the family of intersections \(gW\cap Z\). Such statements also occur
in the work of Beilinson \cite{Bei87} and Katz
\cite[Corollary A.5]{Kat93}. We prove the corresponding generic
vanishing theorem over an arbitrary algebraically closed field.

Our result fits a longer tradition. Green--Lazarsfeld
\cite{green1987deformation} treated line bundles on abelian varieties.
Subsequent work extended generic vanishing to \(\ell\)-adic sheaves on
tori (Gabber--Loeser \cite{gabber1996faisceaux}), to \(D\)-modules on
abelian varieties (Schnell \cite{schnell2015holonomic}), to
constructible sheaves on abelian varieties (Weissauer
\cite{MR3498921}; Bhatt--Schnell--Scholze \cite{bhatt2018vanishing}),
and to commutative algebraic groups (Forey--Fres\'an--Kowalski
\cite{forey2021arithmetic}). In all of these the parameter is a
family of \emph{characters} or \emph{local systems} on a fixed
support. Here the sheaf is fixed and the support varies by
translation.

The point is that the relevant family is not merely a family of
intersections; it is a Radon correspondence over the group \(G\).
After a smooth pullback along an orbit map, this correspondence
becomes a product, and the base-change statement reduces to the
K\"unneth formula, which is unconditional in \(D^b_c\) with finite
tor-dimension (the convention adopted in
\S\ref{sec:notations-preliminaries}). From this one obtains
\(t\)-exactness of the Radon transform, and the vanishing theorem
follows from the support bounds for perverse sheaves on \(G\).

We state the result for locally closed affine subvarieties;
Section~\ref{sec:affine=smooth-correspondence} proves the more
flexible version in which the inclusions are replaced by quasi-finite
affine morphisms.

\subsection{Summary of main results of the article}
\label{sec:summary-main-results-article}

Let $X$ be a projective variety over an algebraically closed field $k$,
equipped with an action of a connected algebraic group $G$. We assume
that, for some (and hence every) closed point $x\in X$, the orbit map
\(\mu_x:G\to X\) is smooth and surjective. Let
\(\phi:W\hookrightarrow X\) and \(f:Z\hookrightarrow X\) be locally
closed affine immersions, with \(W\) smooth over \(k\). Under these
assumptions one has a Radon transform

\[
\sR_{W!} : \mathrm{D}^{b}_{c}(X) \rightarrow \mathrm{D}^{b}_{c}(G).
\]

Theorem~\ref{thm:radon-t-exact} proves that
\(\sR_{W!}\circ f_{*}\) is \(t\)-exact for the perverse
\(t\)-structure. We record the resulting support-locus statement.

\begin{nota}
\label{nota:intro-main-thm}
For \(g\in G(k)\), let
\({}_{g}\phi^Z:gW\cap Z\to Z\) and
\({}_{g}f^Z:gW\cap Z\to gW\) be the induced maps. For
\(K\in D^b_c(Z)\), set
\[
G_{i}(K):=\{g \in G(k)\mid
\mathrm{H}_{c}^{i}(gW,{}_{g}f^Z_{*}{}_{g}\phi^{Z*}K
[\dim W-\dim X]) \neq 0\}.
\]
\end{nota}

\begin{thm}
\label{thm:intro-main-thm}(see Corollary \ref{cor:generic-vanishing-general})
Let \(K\) be a perverse sheaf on \(Z\). There is a dense open subset
\(U\subset G\) such that
\begin{enumerate}[(i)]
\item \(G_i(K)\cap U=\emptyset\) for \(i<0\);
\item for \(i\geq 0\),
\[
\dim(U\cap G_{i}(K)) \leq \dim G-i.
\]
\end{enumerate}
Moreover, after replacing \(U\) by a smaller dense open subset, for
all \(g\in U(k)\) and all \(i\neq \dim W-\dim X\) one has
\[
\mathrm{H}^{i}_{c}(gW,{}_{g}f^Z_{*}{}_{g}\phi^{Z*}K)
\simeq
\mathrm{H}^{i}(Z,{}_{g}\phi^Z_{!}{}_{g}\phi^{Z*}K)
=0.
\]
\end{thm}

Together with Laumon's comparison of the ordinary and compactly-supported
Euler characteristics \cite{Lau81}, this gives the sign statement.

\begin{cor}
\label{cor:intro-sign-euler}(see Corollary \ref{cor:positivity-euler-char})
Assume that the coefficients form a field, and let \(K\) be a perverse
sheaf on \(Z\). There is a dense open subset \(U\subset G\) such that,
for all \(g\in U(k)\),
\[
(-1)^{\dim X-\dim W}\chi(gW \cap_{f,\phi} Z,{}_{g}\phi^{Z*}K) \geq 0.
\]
\end{cor}

If \(Z\) is smooth and \(K=\Lambda_Z[\dim Z]\), this is the ordinary
Euler-characteristic inequality
\[
(-1)^{\dim X-\dim W+\dim Z}\chi(gW\cap Z)\geq 0.
\]

This includes Kumar's sign conjecture for triple intersections of
Schubert cells \cite[Conjecture D]{kumar2024conjectural} (conjectured
independently by Knutson--Zinn-Justin and Mihalcea; see also
\cite[Conjecture 9.4]{fan2022pieri}). In characteristic zero the
conjecture was proved in some cases by Knutson--Zinn-Justin
\cite[Theorem 4]{knutson2022schubert} and in full generality by
Sch\"urmann--Simpson--Wang \cite[Corollary 1.5]{schurmann2025new};
Corollary~\ref{cor:intro-sign-euler} extends the result to arbitrary
characteristic.

When \(k=\overline{\bbF_q}\) and the data are defined over \(\bbF_q\),
the same \(t\)-exactness theorem has arithmetic consequences. For a
perverse Weil sheaf \(K\), the trace function of
\(\sR_{W!}\circ f_*K\) is, on a dense open subset of \(G\), the
Frobenius trace on the single middle group appearing in
Theorem~\ref{thm:intro-main-thm}. The complement of this open has
\(\bbF_{q^n}\)-points \(O(q^{n(\dim G-1)})\), by Lang--Weil. Under
a middle-extension (or no-boundary-contribution) condition at
infinity, the same trace identity gives a Weil bound for the resulting `exponential sums'
on the intersections \(gW\cap_{f,\phi}Z\).
Section~\ref{sec:arithmetic-corollaries} records the precise
statements.

\section{Notation and preliminaries}
\label{sec:notations-preliminaries}

%\subsubsection{\'{E}tale Sheaves:}
%\label{sec:etale-sheaves}

Throughout, \(k\) is an algebraically closed field and \(\ell\) is a
prime invertible in \(k\). The coefficient ring \(\Lambda\) is either a
finite ring of torsion prime to \(\mathrm{char}(k)\), or an algebraic
extension \(E/\bbQ_{\ell}\). All schemes are separated and of finite
type over \(k\), and all morphisms are \(k\)-morphisms.

For a scheme \(X\), write \(D^b_c(X)\) for the bounded derived category
of constructible \'{e}tale sheaves of finite tor-dimension with
coefficients in \(\Lambda\). A \emph{sheaf} means an object of
\(D^b_c(X)\). This category carries the standard \(t\)-structure
\cite[1.1.2, (e)]{Del80} and the middle perverse \(t\)-structure
\cite[\S 4.0]{BBDG18}. We denote standard cohomology sheaves by
\(\mathcal{H}^i\).

\subsection{\'Etale local triviality of a correspondence between $G$ and $X$}
\label{sec:group-action}

Let \(G\) be an algebraic group acting on a scheme \(X\). A morphism
\(\phi:W\to X\) gives a correspondence
\(\Gamma_W\to G\times X\), obtained by base change from the action
map. We record the elementary base-change facts for this
correspondence; they will be used in Proposition~\ref{prop:key-proposition}
and Lemma~\ref{lem:structure-U_B}.

\begin{nota}\label{nota:group-multi-action}
\begin{enumerate} [(a)]
    \item Let \(m:G\times G\to G\) be multiplication, and let
    \(\mu:G\times X\to X\) be the left action of \(G\) on \(X\).
    \item For a closed point \(x\in X\), let
    \(\mu_x:G\to X\) be the orbit map, i.e. the restriction of \(\mu\)
    to \(G\times\{x\}\).
    \item For a scheme \(Y\), write \(1_Y\) for the identity of \(Y\).
    \item The multiplication map fits into the commutative diagram

\begin{equation}
\label{eqn:grp-multiplication}
\begin{tikzcd}
G \times G \ar[dr,"\text{pr}_{1}"'] \ar[rr,"\simeq","\tau"']& & G \times G \ar[dl,"m"]\\
& G &. 
\end{tikzcd}
\end{equation}

Here \(\tau:G\times G\to G\times G\) is given by
\(\tau(g,h)=(gh^{-1},h)\). Thus \(\tau\) is an isomorphism over \(G\),
where \(G\times G\) is viewed over \(G\) through the second projection.

\end{enumerate}
\end{nota}

\begin{lem}\label{lem:base-change-multiplication}
Let \(\phi:W\to G\) be a morphism, and let
\(\psi:\Gamma_W\to G\times G\) be the base change of \(\phi\) along
the multiplication map \(m:G\times G\to G\). Then there is a cartesian
diagram

\[
\begin{tikzcd}
\Gamma_{W} \ar[r,"\psi"]  & G \times G \\
W \times G \ar[u,"\prescript{}{W}{\tau}"',"\wr"]  \ar[r,"\phi \times 1_{G}"'] & G \times G \ar[u,"\tau"',"\wr"].
\end{tikzcd}
\]

\end{lem}

\begin{proof}
The result follows immediately from (\ref{eqn:grp-multiplication}).
\end{proof}

For a morphism \(f:Z\to G\), base change of \(\tau\) along
\(1_G\times f:G\times Z\to G\times G\) similarly produces an
isomorphism \(\tau_Z:G\times Z\simeq G\times Z\) over \(G\).

We use two standard facts about \'{e}tale cohomology.

\begin{lem}\label{lem:etale-vanishing-smooth-surjective}
Let \(\pi:Y\to X\) be smooth and surjective, and let
\(\phi:K_1\to K_2\) be a morphism of sheaves on \(X\). Then \(\phi\)
is an isomorphism if and only if \(\pi^*\phi\) is an isomorphism.
\end{lem}

\begin{proof}
Taking the cone of \(\phi\), it suffices to prove that \(K=0\) if and
only if \(\pi^*K=0\). Since \(\pi\) is smooth and surjective, it has
sections \'{e}tale locally on \(X\) \cite[Corollary 17.16.3
(ii)]{egaIV4}; the assertion follows.
\end{proof}

\begin{lem}\label{lem:kunneth-formula-projection}
  Let \(f_1\times f_2:Y_1\times Y_2\to X_1\times X_2\) be a product
  morphism. For sheaves \(K_1\) on \(Y_1\) and \(K_2\) on \(Y_2\),
  there are natural isomorphisms
  \[
(f_{1} \times f_{2})_{?}(K_{1} \boxtimes K_{2}) \simeq f_{1?}K_{1} \boxtimes f_{2?}K_{2},  
\]
\noindent for \(? \in \{!,*\}\).
\end{lem}

\begin{proof}
For \(?=!\), this is \cite[XVII, 5.4.3]{ArtinGrothendieckSGA4}.
The case \(?=*\) follows by biduality from
\cite[Th\'eor\`emes de Finitude, 4.3]{SGA4.5}.
\end{proof}

\subsection{The universal family and base change}

Let \(X\) be equipped with a left action of an algebraic group \(G\),
with action map \(\mu:G\times X\to X\). Let \(\phi:W\to X\) be a
morphism, and let \(\psi:\Gamma_W\to G\times X\) be the base change of
\(\phi\) along \(\mu\). For a morphism \(f:Z\to X\), write
\(f_G=1_G\times f:G\times Z\to G\times X\). The morphisms
\(\psi^Z\) and \(f^Z_G\) are defined by the cartesian diagram
\begin{equation}
\label{eqn:key-cartesian-diagram}
\begin{tikzcd}
\Gamma^{Z}_{W} \ar[r,"\psi^{Z}"] \ar[d,"f_{G}^{Z}"] & G \times Z \ar[d,"f_{G}"] \\
\Gamma_{W} \ar[r,"\psi"] & G \times X
\end{tikzcd}
\end{equation}

The proof rests on one observation. After base change along an orbit
map \(\mu_x\), the action becomes the left translation action of \(G\)
on itself. In that case the universal correspondence is a product, and
the desired base-change isomorphism is the K\"unneth formula.

\begin{prop}[Base change for the universal family]\label{prop:key-proposition}
Suppose that, for some (and hence every) closed point \(x\in X\), the
orbit map \(\mu_x\) is smooth and surjective. Let \(K\) be a sheaf on
\(Z\), and let \(K_{\Gamma}\) be its pullback to \(\Gamma^Z_W\). Then
the natural map
\[
\cT: \psi_{!}f^{Z}_{G*}K_{\Gamma} \to f_{G*}\psi^{Z}_{!}K_{\Gamma},
\]
is an isomorphism.
\end{prop}

\begin{proof}
  By Lemma~\ref{lem:etale-vanishing-smooth-surjective}, it is enough
  to check the map after the smooth surjective pullback
  \(1_G\times\mu_x:G\times G\to G\times X\). Replacing
  \(W,Z,\phi,f\) by their base changes along \(\mu_x\), we may assume
  \(X=G\) and that \(G\) acts on itself by left translation.
  Lemma~\ref{lem:base-change-multiplication} and base change of \(\tau\)
  along \(1_G\times f\) give the following diagram with cartesian
  squares:

  \begin{equation}\label{eq:key-cartesian-diagram-1}
    \begin{tikzcd}
      W \times Z \ar[r, "\phi \times 1_{Z}"] \ar[d,"\wr"] \ar[ddd, bend right=60, "1_{W} \times f"'] & G \times Z \ar[d,"\tau_{Z}","\wr"'] \ar[ddd, bend left=60, "1_{G} \times f"] \\
      \Gamma^{Z}_{W} \ar[r,"\psi^{Z}"] \ar[d,"f^{Z}_{G}"'] & G \times Z \ar[d,"f_{G}"] \\
      \Gamma_{W} \ar[r,"\psi"] & G \times G \\
      W \times G \ar[u,"\lsub{W}{\tau}","\wr"'] \ar[r,"\phi \times 1_{G}"] & G \times G \ar[u,"\tau"',"\wr"].
    \end{tikzcd}  
  \end{equation}

Under the identifications
\(\Gamma^Z_W\simeq W\times Z\) and \(\Gamma_W\simeq W\times G\),
the morphisms \(\psi^Z,\psi,f^Z_G,f_G\) become respectively
\(\phi\times 1_Z\), \(\phi\times 1_G\), \(1_W\times f\), and
\(1_G\times f\), and \(K_\Gamma\) becomes \(\Lambda\boxtimes K\).
With these source identifications and the target identifications given
by \(\tau_Z\) and \(\tau\), the natural transformation \(\cT\) is the
K\"unneth comparison map. Lemma~\ref{lem:kunneth-formula-projection}
therefore gives the assertion.
 
\end{proof}

\begin{rem}\label{rem:comparison-base-change-theorem}
Analogues of Proposition~\ref{prop:key-proposition} are central in
several proofs of generic vanishing; see \cite[Lemma 3.3]{Bei87},
\cite[p.\ 2101]{deCM10}, and
\cite[Proposition 1.7]{schurmann2025new}. The point here is that the
base-change isomorphism is universal. It uses neither characteristic
zero nor smoothness of \(W\) and \(Z\), and it imposes no genericity
condition.
\end{rem}

\section{A Radon transform from $X$ to $G$}
\label{sec:affine=smooth-correspondence}

We now attach to a triple \((G,X,\phi:W\to X)\), where \(G\) acts on
\(X\), a Radon transform from sheaves on \(X\) to sheaves on \(G\). In
the incidence correspondence of Example~\ref{ex:incidence-correspondence-example},
this is the pullback of the usual Radon transform \cite{Br}.

%\subsubsection{The set-up}
%\label{sec:set-up}

\subsection{The Radon transform}

Let \(G\) act on \(X\), let \(\phi:W\to X\) be a morphism, and let
\(\psi:\Gamma_W\to G\times X\) be the base change of \(\phi\) along
the action map \(\mu:G\times X\to X\). Let
\(\pi_W^\vee:\Gamma_W\to G\) and \(\pi_W:\Gamma_W\to X\) be the
induced projections.

\begin{equation}
\label{eqn:basic-diagram}
\begin{tikzcd}[row sep=large, column sep=large]
\Gamma_W \ar[r, "\psi"] 
         \ar[dr, bend right=20, "\pi^{\vee}_{W}"'] 
         \ar[drr, bend left=50, "\pi_W"]
  & G \times X \ar[d, "\mathrm{pr}_G"'] \ar[dr, "\mathrm{pr}_X"] \\
& G & X  & W \ar[l,"\phi"]
\end{tikzcd}
\end{equation}

\begin{lem}\label{lem:structure-U_B}
With this notation:
\begin{enumerate}[(a)]
\item If \(W\) is affine, then \(\pi_W^\vee\) is affine.
\item Suppose that \(W\) is smooth over \(k\), and that, for some (and
hence every) closed point \(x\in X\), the orbit map
\(\mu_x:G\to X\) is smooth and surjective. Then \(\pi_W\) is smooth.
\end{enumerate}
\end{lem}

\begin{proof}
Consider the commutative diagram
\begin{equation}
\label{eqn:action-projection}
\begin{tikzcd}
G \times X \ar[rr,"\sigma"',"\simeq"] \ar[dr,"\text{pr}_{X}"'] &  & G \times X \ar[dl,"\mu"]\\
& X &
\end{tikzcd}
\end{equation}
where \(\sigma(g,x)=(g,g^{-1}x)\), and \(\mathrm{pr}_X\) is projection
to the second factor. Base change along \(\phi:W\to X\) gives an
isomorphism \(\sigma':G\times W\simeq \Gamma_W\). Since
\(\mathrm{pr}_G\circ\sigma^{-1}=\mathrm{pr}_G\), we have
\[
\pi^\vee_W=\mathrm{pr}_G\circ(\sigma')^{-1}.
\]
Thus \(\pi_W^\vee\) identifies with the projection \(G\times W\to G\),
which is affine when \(W\) is affine.

For (b), smoothness descends under faithfully flat morphisms
\cite[\href{https://stacks.math.columbia.edu/tag/02VL}{Lemma
02VL}]{stacks-project}. We may therefore test \(\pi_W\) after base
change along the smooth surjection \(\mu_x:G\to X\). Let
\(\tilde W_x=W\times_X G\), and let
\(\tilde\phi:\tilde W_x\to G\) be the induced morphism. Since \(W\)
and \(\mu_x\) are smooth, \(\tilde W_x\) is smooth over \(k\). The
pullback \(\Gamma_W\times_X G\) is also the pullback of
\(\psi:\Gamma_W\to G\times X\) along
\(1_G\times\mu_x:G\times G\to G\times X\). The identity
\(\mu\circ(1_G\times\mu_x)=\mu_x\circ m\) identifies it with the base
change of \(\tilde\phi\) along \(m:G\times G\to G\). By
Lemma~\ref{lem:base-change-multiplication}, this is
\(\tilde W_x\times G\), and \(\pi_W\) becomes projection to the second
factor. This projection is smooth.
\end{proof}

We now assume that
\begin{enumerate}[(i)]
\item \(G\), \(X\), and \(W\) are irreducible;
\item for a closed point \(x\in X\), the orbit map
\(\mu_x:G\to X\) is smooth and surjective;
\item \(W\) is smooth over \(k\), so that \(\pi_W\) is smooth by
Lemma~\ref{lem:structure-U_B}(b).
\end{enumerate}

\begin{nota}
For a smooth morphism \(f\) of relative dimension \(d\), write
\(f^\dagger=f^*[d]=f^![-d]\). We suppress Tate twists, which play no
role in the arguments below.
\end{nota}

\begin{defn}\label{defn:radon-transform}
The \emph{Radon transform} associated with \((G,X,\phi:W\to X)\) is
the functor
\(\sR_{W!}:D^b_c(X)\to D^b_c(G)\) defined by
\begin{equation}
\label{eqn:radon-transform-defn-!}
\sR_{W!}:=\mathrm{pr}_{G*}\psi_{!}\pi^{\dagger}_{W}
\end{equation}
\end{defn}

 \begin{example}\label{ex:incidence-correspondence-example}
 Let $V/k$ be a finite-dimensional vector space, and denote by $X$ the projective space $\mathbb{P}(V)$ parametrizing one-dimensional linear subspaces of $V$. Let $X^{\vee}$ denote the dual projective space $\mathbb{P}(V^{\vee})$, parametrizing codimension-$1$ linear subspaces of $V$. Let $\sH \subseteq X^{\vee} \times X$ be the universal hyperplane section. Then $\sH \to X$ is a projective bundle and hence Zariski locally (on $X$) a product.

 Fix a hyperplane $\phi: W \to X$. The orbit map $\mu_{W}: \mathrm{GL}(V) \to X^{\vee}$ is then given by the orbit of $W$ under the right action of $\mathrm{GL}(V)$ on $X^{\vee}$. The map $\mu_{W}$ is smooth and surjective, and if we base change $\sH \subseteq X^{\vee} \times X$ along $\mu_{W} \times 1_{X}$, we obtain a correspondence\footnote{For us a \textit{correspondence} from a scheme $Y$ to a scheme $X$ is simply a morphism $\Gamma \to Y \times X$.} from $\mathrm{GL}(V)$ to $X$ which, in the notation of Proposition \ref{prop:key-proposition}, is $\Gamma_{W}$. The Radon transform in (\ref{eqn:radon-transform-defn-!}) is then the pullback of Brylinski's Radon transform \cite{Br} along the smooth and surjective morphism $\mu_{W}: \mathrm{GL}(V) \to X^{\vee}$.

 \end{example}

Let \(f:Z\to X\) be any morphism. We shall use the following diagram,
whose parallelograms are cartesian.

\begin{equation}
\label{eqn:diagram-with-all-actors}
\begin{tikzcd}[row sep=large, column sep=large]
  \Gamma_{W}^{Z}
    \arrow[d,"f_{G}^{Z}"']
    \arrow[r,"\psi^{Z}"]
    \arrow[drr, dotted, bend left=85, "\pi^{Z}_W"']
  & G\times Z
    \arrow[d,"f_{G}"']
    \arrow[dr,"\mathrm{pr}_{Z}"] 
  & \\[-2pt]
  \Gamma_{W}
    \arrow[r,"\psi"]
    \arrow[d, dotted, "\pi_{W}^{\vee}"']
    \arrow[drr, dotted, bend right=35, "\pi_W"']
  & G\times X
    \arrow[dl,"\mathrm{pr}_{G}"']
    \arrow[dr,"\mathrm{pr}_{X}"']
  & Z \arrow[d,"f"] 
  & \\[-2pt]
  G & & X & W \arrow[l,"\phi"']
\end{tikzcd}
\end{equation}

\begin{lem}\label{lem:t-exactness-lemma}
The natural transformation of functors
\[
\mathrm{pr}_{G*}f_{G*}\psi^{Z}_{!}\pi_{W}^{Z\dagger} \to\sR_{W!}\circ f_{*}, 
\]
\noindent from \(\mathrm{D}^{b}_{c}(Z)\) to \(\mathrm{D}^{b}_{c}(G)\)
is an isomorphism.
\end{lem}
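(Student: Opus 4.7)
The plan is to expand $\sR_{W!}\circ f_{*}$ by definition and then to commute two pairs of functors using the cartesian squares visible in diagram (\ref{eqn:diagram-with-all-actors}). Since $\mathrm{pr}_{G}\circ f_{G}=\mathrm{pr}_{G}\colon G\times Z\to G$, the outer $\mathrm{pr}_{G*}$ on the left-hand side of the statement absorbs $f_{G*}$ harmlessly, and after unpacking $\sR_{W!}f_{*}=\mathrm{pr}_{G*}\psi_{!}\pi_{W}^{\dagger}f_{*}$ the problem reduces to producing natural equivalences
\[
f_{G*}^{Z}\,\pi_{W}^{Z\dagger}\;\simeq\;\pi_{W}^{\dagger}\,f_{*} \quad\text{and}\quad \psi_{!}\,f_{G*}^{Z}\;\simeq\;f_{G*}\,\psi_{!}^{Z},
\]
and checking that the splicing realises the natural transformation in the statement.

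For the first equivalence, I would paste the left cartesian square in (\ref{eqn:diagram-with-all-actors}) together with the evidently cartesian square obtained by projecting $f_{G}=1_{G}\times f$ onto $f$ via the second factor. Using $\pi_{W}=\mathrm{pr}_{X}\circ\psi$ and $\pi_{W}^{Z}=\mathrm{pr}_{Z}\circ\psi^{Z}$, this exhibits the outer square
\[
\begin{tikzcd}
\Gamma_{W}^{Z} \ar[r,"\pi_{W}^{Z}"] \ar[d,"f_{G}^{Z}"'] & Z \ar[d,"f"] \\
\Gamma_{W} \ar[r,"\pi_{W}"'] & X
\end{tikzcd}
\]
as cartesian. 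Since $\pi_{W}$ is smooth by Lemma \ref{lem:structure-U_B}(b), smooth base change gives $\pi_{W}^{*}f_{*}\simeq f_{G*}^{Z}\pi_{W}^{Z*}$; because $\pi_{W}^{Z}$ has the same relative dimension as $\pi_{W}$, the shift in the convention $(-)^{\dagger}$ is preserved under the base change, so the displayed equivalence follows.

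The second equivalence is exactly the content of Proposition \ref{prop:key-proposition}, applied to the cartesian square (\ref{eqn:key-cartesian-diagram}); the standing assumptions on smoothness and surjectivity of the orbit map are precisely its hypotheses. Splicing the two equivalences inside $\mathrm{pr}_{G*}(-)$ produces
\[
\mathrm{pr}_{G*}f_{G*}\psi_{!}^{Z}\pi_{W}^{Z\dagger}\;\simeq\;\mathrm{pr}_{G*}\psi_{!}f_{G*}^{Z}\pi_{W}^{Z\dagger}\;\simeq\;\mathrm{pr}_{G*}\psi_{!}\pi_{W}^{\dagger}f_{*}\;=\;\sR_{W!}\circ f_{*},
\]
and one verifies that this coincides with the natural transformation in the statement since each step is induced by a base change natural transformation. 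There is no serious obstacle in the argument: the computation is formal once Proposition \ref{prop:key-proposition} is available, and it is exactly that proposition which plays the role of proper base change here (since $f$ is not assumed proper).
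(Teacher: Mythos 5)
Your proof is correct and takes essentially the same route as the paper: the paper's one-line argument cites exactly Proposition \ref{prop:key-proposition} together with smooth base change for the cartesian square $\Gamma_W^Z \to Z$, $\Gamma_W \to X$; your chain $\mathrm{pr}_{G*}f_{G*}\psi_{!}^{Z}\pi_{W}^{Z\dagger}\simeq\mathrm{pr}_{G*}\psi_{!}f_{G*}^{Z}\pi_{W}^{Z\dagger}\simeq\mathrm{pr}_{G*}\psi_{!}\pi_{W}^{\dagger}f_{*}$ is precisely how that citation unwinds. The preliminary remark that $\mathrm{pr}_{G*}$ ``absorbs'' $f_{G*}$ is a harmless red herring --- the actual splicing keeps $f_{G*}$ around and commutes it past $\psi_{!}^{Z}$ via Proposition \ref{prop:key-proposition}, which is what the proof needs.
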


\begin{proof}
This is Proposition~\ref{prop:key-proposition}, followed by smooth
base change for the cartesian diagram
\[
\begin{tikzcd}
\Gamma_{W}^{Z} \ar[r,"\pi_{W}^{Z}"] \ar[d,"f^{Z}_{G}"] & Z \ar[d,"f"] \\
\Gamma_{W} \ar[r,"\pi_{W}"] & X. 
\end{tikzcd}
\]
\end{proof}

\subsection{Hypotheses for $t$-exactness of $\sR_{W!}\circ f_{*}$}
\label{sec:set-up-t-exactness}

We now impose the hypotheses under which the Radon transform is
\(t\)-exact.

\begin{enumerate}[(i)]
\item \(X\) is proper and connected over \(k\), and carries an action
of a connected algebraic group \(G\).
\item For a closed point \(x\in X\), the orbit map \(\mu_x:G\to X\)
is smooth and surjective.\footnote{The argument only uses the existence
of \'{e}tale-local sections of \(\mu_x\), which follows from smooth
surjectivity; see \cite[Corollaire 17.16.3]{egaIV4}.}
\item The morphism \(\phi:W\to X\) has \(W\) smooth, connected, and
affine over \(k\).\footnote{In Example~\ref{ex:incidence-correspondence-example},
\(W\) is the chosen generator of the incidence correspondence.} By
Lemma~\ref{lem:structure-U_B}, the morphisms \(\pi_W\) and
\(\pi_W^\vee\) in \eqref{eqn:basic-diagram} are respectively smooth
and affine. Thus \(\pi_W^\dagger\) is \(t\)-exact
\cite[Proposition 4.2.5]{BBDG18}, and \(\pi^\vee_{W!}\) is left
\(t\)-exact by Artin vanishing \cite[Th\'eor\`eme 4.1.1]{BBDG18}.
\item The morphism \(\phi\) is quasi-finite. Since \(W\) is affine and
\(X\) is separated, \(\phi\) is affine. Hence its base changes
\(\psi\) and \(\psi^Z\) are quasi-finite and affine; by
\cite[Corollaire 4.1.3]{BBDG18}, \(\psi^Z_!\) is \(t\)-exact.
\item The morphism \(f:Z\to X\) is quasi-finite, with \(Z\) affine. In
particular \(f_!\) and \(f_*\) are \(t\)-exact for the perverse
\(t\)-structure \cite[Corollaire 4.1.3]{BBDG18}.
\item The morphism \(\pi^Z_W\) is the base change of the smooth
morphism \(\pi_W\), of constant relative dimension. Hence
\(\pi_W^{Z\dagger}\) is \(t\)-exact.
\end{enumerate}

\begin{thm}[$t$-exactness]\label{thm:radon-t-exact}
Under the assumptions above, the functor $\sR_{W!}\circ f_{*}$ is $t$-exact for the perverse $t$-structures on $\mathrm{D}^{b}_{c}(Z)$ and $\mathrm{D}^{b}_{c}(G)$.
\end{thm}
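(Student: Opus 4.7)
The plan is to reduce $t$-exactness of $\sR_{W!}\circ f_{*}$ to an assertion about a composition in which every single factor is manifestly $t$-exact on the nose, by exploiting the base-change equivalence of Lemma \ref{lem:t-exactness-lemma}.

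First I would rewrite $\sR_{W!}\circ f_{*}$ using Lemma \ref{lem:t-exactness-lemma}, which gives a natural isomorphism
\[
\sR_{W!}\circ f_{*} \;\simeq\; \mathrm{pr}_{G*}\,f_{G*}\,\psi^{Z}_{!}\,\pi_{W}^{Z\dagger}.
\]
The virtue of this form is that only two of the four functors are pushforwards, and they compose into something nicer. Indeed, since $f_{G}=1_{G}\times f$, we have $\mathrm{pr}_{G}\circ f_{G} = p_{Z}$, the projection $G\times Z \to G$, so by functoriality of pushforward
\[
\mathrm{pr}_{G*}\circ f_{G*} \;=\; (p_{Z})_{*}.
\]

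Now I would verify $t$-exactness of each remaining piece. The projection $p_{Z}\colon G\times Z \to G$ is affine, because it is the base change along $G\to \Spec(k)$ of the structure morphism $Z\to\Spec(k)$, which is affine by assumption (v). Therefore by Artin vanishing \cite[Th\'eor\`eme 4.1.1]{BBDG18}, $(p_{Z})_{*}$ is right $t$-exact for the perverse $t$-structure; combined with the fact that $f_{*}$ is always left $t$-exact, $(p_{Z})_{*}$ is $t$-exact. The functor $\psi^{Z}_{!}$ is $t$-exact by assumption (iv), since $\psi^{Z}$ is quasi-finite and affine, and $\pi_{W}^{Z\dagger}$ is $t$-exact by assumption (vi). A composition of $t$-exact functors is $t$-exact, which finishes the argument.

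There is essentially no serious obstacle here: all the preparatory work has been carried out in Section~\ref{sec:set-up-t-exactness}, where exactness of each individual building block has been recorded, and the role of Lemma \ref{lem:t-exactness-lemma} (and thus of Proposition \ref{prop:key-proposition}) is precisely to enable one to move $f_{*}$ past $\psi_{!}$ so that the two pushforwards can be absorbed into the single affine projection $p_{Z}$. The only conceptual point worth emphasizing is that while neither $\mathrm{pr}_{G*}$ alone (whose fibers are only proper, not affine) nor $f_{*}$ applied before the Radon transform is visibly exact in a useful way, their combination after base change along $\psi$ becomes the pushforward along an affine morphism, and this is what allows the argument to go through over a field of arbitrary characteristic.
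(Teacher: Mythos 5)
There is a genuine gap in the proposal, and it is located in the claim that $(p_{Z})_{*}$ is $t$-exact. Artin vanishing does give right $t$-exactness of $(p_{Z})_{*}$ because $p_{Z}\colon G\times Z\to G$ is an affine morphism, but the subsequent sentence --- ``combined with the fact that $f_{*}$ is always left $t$-exact, $(p_{Z})_{*}$ is $t$-exact'' --- is false for the perverse $t$-structure. Pushforward along a non-quasi-finite morphism is not left $t$-exact in general: take $p\colon \mathbb{A}^{1}\to\operatorname{Spec}(k)$; then $\Lambda[1]$ is perverse on $\mathbb{A}^{1}$, but $Rp_{*}\Lambda[1]$ is concentrated in cohomological degree $-1$, hence not in ${}^pD^{\geq 0}(\operatorname{Spec} k)$. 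Since $Z$ is merely affine (not zero-dimensional), $p_{Z}$ is affine but not quasi-finite, and so $(p_{Z})_{*}$ is only right $t$-exact. As a consequence, the decomposition $\mathrm{pr}_{G*}f_{G*}\psi^{Z}_{!}\pi_{W}^{Z\dagger}$ of Lemma \ref{lem:t-exactness-lemma} suffices only for the right $t$-exactness of $\sR_{W!}\circ f_{*}$; it cannot by itself yield left $t$-exactness, because after $\psi^{Z}_{!}$ and $\pi_{W}^{Z\dagger}$ (both $t$-exact) the remaining $(p_{Z})_{*}$ has no left-exactness whatsoever.

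Left $t$-exactness needs a genuinely different decomposition, which is where the hypotheses that $X$ is proper and $W$ is affine finally enter. Properness of $X$ makes $\mathrm{pr}_{G}$ proper, so $\mathrm{pr}_{G*}\psi_{!}=\mathrm{pr}_{G!}\psi_{!}=\pi^{\vee}_{W!}$, and hence $\sR_{W!}=\pi^{\vee}_{W!}\pi_{W}^{\dagger}$. By Lemma \ref{lem:structure-U_B}(a) the morphism $\pi^{\vee}_{W}$ is affine (this uses $W$ affine), so $\pi^{\vee}_{W!}$ is left $t$-exact by Artin vanishing; since $\pi_{W}^{\dagger}$ and $f_{*}$ are $t$-exact, the composite $\sR_{W!}\circ f_{*}=\pi^{\vee}_{W!}\pi_{W}^{\dagger}f_{*}$ is left $t$-exact. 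Notice that your argument nowhere invokes the properness of $X$ or the affineness of $W$, and both are essential to the statement: this is a useful sanity check that some hypothesis is being left unused. The paper's proof is precisely the two-pronged argument above, using the base-change form of Lemma \ref{lem:t-exactness-lemma} for the right half and the $\pi^{\vee}_{W!}\pi_{W}^{\dagger}$ collapse for the left half; your proposal carries out only the first half correctly.
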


\begin{proof}

We prove left and right \(t\)-exactness separately.

\textbf{Left $t$-exactness of $\sR_{W!} \circ f_{*}$:}
By (v), \(f_*\) is \(t\)-exact. Since \(X\) is proper,
\[
\sR_{W!}=\pi^{\vee}_{W!}\pi_{W}^{\dagger}.
\]
The left \(t\)-exactness of \(\sR_{W!}\circ f_*\) follows from (iii).
\newline
\textbf{Right $t$-exactness of $\sR_{W!} \circ f_{*}$:}
By Lemma~\ref{lem:t-exactness-lemma}, it suffices to prove right
\(t\)-exactness of
\(\mathrm{pr}_{G*}f_{G*}\psi^{Z}_{!}\pi_{W}^{Z\dagger}\). The functors
\(\psi^Z_!\) and \(\pi_W^{Z\dagger}\) are \(t\)-exact by (iv) and
(vi). The composite \(\mathrm{pr}_{G*}f_{G*}\) is the direct image for
the projection \(G\times Z\to G\), which is affine because \(Z\) is
affine. Artin vanishing gives the required right \(t\)-exactness.

\end{proof}

\begin{nota}\label{nota:generic-vanishing-scheme}
For the application of Theorem~\ref{thm:radon-t-exact}, we use the
following notation.
\begin{enumerate}[(a)]

\item For \(g\in G(k)\), define \(gW\) by the cartesian diagram
\[
\begin{tikzcd}
X \ar[d,"g^{-1}"] & gW \ar[l,"_{g}\phi"] \ar[d] \\
X & W \ar[l,"\phi"]
\end{tikzcd}
\]

\item Define \(gW\cap_{f,\phi}Z\) by the cartesian diagram
\begin{equation}
\label{dia:intersection-given-g}
\begin{tikzcd}
Z \ar[d,"f"] & gW \cap_{f,\phi}Z \ar[d,"_{g}f^{Z}"] \ar[l,"_{g}\phi^{Z}"] \\
X & gW \ar[l,"~_{g}\phi"]
\end{tikzcd}
\end{equation} 

\item For a sheaf \(K\) on \(Z\), set
\[
G_{i}(K):=\{g \in G(k)\mid
\mathrm{H}_{c}^{i}(gW,{}_{g}f^{Z}_{*}{}_{g}\phi^{Z*}K
[\dim W-\dim X]) \neq 0\}.
\]
\end{enumerate}
\end{nota}

The main application is the following generic vanishing theorem.

\begin{cor}\label{cor:generic-vanishing-general}
Let \(K\) be a perverse sheaf on \(Z\). Then there exists a dense open
subset \(U\subset G\) such that
\begin{enumerate}[(i)]
\item \(U\cap G_i(K)=\emptyset\) for \(i<0\);
\item for \(i\geq 0\), \(\dim(U\cap G_i(K))\leq \dim G-i\).
\end{enumerate}

In particular, for \(g\) in a Zariski dense open subset of \(G(k)\) and
for \(i\neq \dim W-\dim X\),
\begin{equation}
\label{eqn:main-vanishing}
\mathrm{H}^{i}_{c}(gW,~_{g}f^{Z}_{*}~_{g}\phi^{Z*}K) \simeq \mathrm{H}^{i}(Z,~_{g}\phi^{Z}_{!}~_{g}\phi^{Z*}K) = 0.
\end{equation}
\end{cor}

\begin{proof}
By Theorem~\ref{thm:radon-t-exact},
\(\sR_{W!}\circ f_{*}(K)\) is perverse on \(G\). Hence
\[
\mathcal{H}^{a}(\sR_{W!}\circ f_{*}(K)) = 0 \quad\text{for}\quad a > 0\ \text{or}\ a < -\dim(G),
\]
and
\[
\dim(\mathrm{Supp}\,\mathcal{H}^{a}(\sR_{W!}\circ f_{*}(K))) \leq -a \quad\text{for}\quad -\dim(G) \leq a \leq 0.
\]
By Deligne's generic base change theorem
\cite[Chapitre 7, Th\'eor\`eme 1.9]{SGA4.5}, there exists a dense open
subset \(U\subset G\) such that, for all \(g\in U(k)\),\footnote{The
shift by \(\dim G+\dim W-\dim X\) is the relative dimension of
\(\pi_W\).}
\[
(\sR_{W!}\circ f_{*}(K))_{g} \simeq R\Gamma_{c}(gW,~_{g}f^{Z}_{*}~_{g}\phi^{Z*}K[\dim(G)+\dim(W)-\dim(X)]).
\]
Taking $\mathcal{H}^{i-\dim(G)}$ of both sides and comparing with the definition of $G_{i}(K)$ in Notation \ref{nota:generic-vanishing-scheme}, we have
\[
g \in G_{i}(K) \cap U \iff \mathcal{H}^{i-\dim(G)}(\sR_{W!}\circ f_{*}(K))_{g} \neq 0.
\]
Parts (i) and (ii) follow from these perversity bounds, applied with
\(a=i-\dim G\).

For the final statement, Lemma~\ref{lem:equality-cohomology-with-without}
gives a dense open subset \(U_1\subset G\) such that
\[
\mathrm{H}^{i}_{c}(gW,~_{g}f^{Z}_{*}~_{g}\phi^{Z*}K) \simeq \mathrm{H}^{i}(Z,~_{g}\phi^{Z}_{!}~_{g}\phi^{Z*}K)
\]
for all \(g\in U_1(k)\) and all \(i\). Part (i) gives vanishing for
\(i<\dim W-\dim X\) and \(g\in U\). By part (ii), for each
\(j\geq 1\), the subset
\(G_{j+\dim W-\dim X}(K)\cap U\) has dimension at most \(\dim G-j\),
and is therefore proper in \(U\). Only finitely many such subsets are
non-empty, since \(\mathrm{H}^{i}_{c}(gW,\cdot)=0\) for \(|i|\gg 0\).
Removing their union from \(U\cap U_1\) gives a dense open subset on
which all groups with \(i\neq \dim W-\dim X\) vanish.
\end{proof}

\begin{lem}\label{lem:equality-cohomology-with-without}
For any sheaf \(K\) on \(\Gamma^Z_W\), there is a dense open subset
\(U\subset G\) such that, for every \(g\in U(k)\), there is a natural
isomorphism
\[
R\Gamma_{c}(gW,~_{g}f^{Z}_{*}K) \simeq R\Gamma(Z,~_{g}\phi^{Z}_{!}K).
\]
\end{lem}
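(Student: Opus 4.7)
The strategy is to realize both sides of the claimed isomorphism as stalks at generic $g\in G(k)$ of two sheaves on $G$ that are globally isomorphic. Concretely, set
\[
\mathcal{A} := R\pi^{\vee}_{W!}\, Rf^{Z}_{G*}K, \qquad \mathcal{B} := R\mathrm{pr}_{G*}^{G\times Z}\, R\psi^{Z}_{!}K,
\]
both in $D^{b}_{c}(G)$. I will first establish a global identification $\mathcal{A}\simeq\mathcal{B}$ on $G$ using Proposition~\ref{prop:key-proposition} together with the properness of $X$, and then identify $\mathcal{A}_{g}$ and $\mathcal{B}_{g}$ with the left- and right-hand sides of the lemma respectively, for $g$ in a common dense open subset, by combining proper base change with Deligne's generic base change theorem \cite[Chapitre 7, Th\'eor\`eme 1.9]{SGA4.5}.

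For the global isomorphism, Proposition~\ref{prop:key-proposition} gives $R\psi_{!}Rf^{Z}_{G*}K \simeq Rf_{G*}R\psi^{Z}_{!}K$ in $D^{b}_{c}(G\times X)$. I then apply $R\mathrm{pr}_{G*}^{G\times X}$ to both sides; since $X$ is proper, $\mathrm{pr}_{G}^{G\times X}$ is proper and therefore $R\mathrm{pr}_{G*}^{G\times X} = R\mathrm{pr}_{G!}^{G\times X}$. Using $\pi^{\vee}_{W} = \mathrm{pr}_{G}^{G\times X}\circ \psi$, the left side rewrites as $R\pi^{\vee}_{W!}Rf^{Z}_{G*}K = \mathcal{A}$; using $\mathrm{pr}_{G}^{G\times X}\circ f_{G} = \mathrm{pr}_{G}^{G\times Z}$, the right side rewrites as $R\mathrm{pr}_{G*}^{G\times Z}R\psi^{Z}_{!}K = \mathcal{B}$.

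For the stalks at generic $g$, proper base change for $R\pi^{\vee}_{W!}$ gives $\mathcal{A}_{g}\simeq R\Gamma_{c}(gW, (Rf^{Z}_{G*}K)|_{gW})$ for every $g$, and Deligne's generic base change theorem applied to $f^{Z}_{G}$ (in its family form over the base $G$) produces a dense open $U_{1}\subset G$ such that for $g\in U_{1}(k)$ the restriction satisfies $(Rf^{Z}_{G*}K)|_{gW}\simeq R({}_{g}f^{Z})_{*}(K|_{gW\cap_{f,\phi}Z})$; hence $\mathcal{A}_{g}\simeq R\Gamma_{c}(gW,\,{}_{g}f^{Z}_{*}K)$. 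Symmetrically, proper base change for $R\psi^{Z}_{!}$ gives $(R\psi^{Z}_{!}K)|_{\{g\}\times Z}\simeq R({}_{g}\phi^{Z})_{!}(K|_{gW\cap_{f,\phi}Z})$ for every $g$, and Deligne's theorem applied to $\mathrm{pr}_{G}^{G\times Z}$ yields a dense open $U_{2}\subset G$ with $\mathcal{B}_{g}\simeq R\Gamma(Z,\,{}_{g}\phi^{Z}_{!}K)$. Setting $U := U_{1}\cap U_{2}$ and combining with the global isomorphism then gives the lemma.

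\textbf{Main obstacle.} The only nontrivial geometric content, namely base change for the universal family, is already absorbed into Proposition~\ref{prop:key-proposition}; everything else is formal functoriality combined with the properness of $X$ and Deligne's generic base change. The one step that requires care is the fiberwise application of Deligne's theorem producing the dense open $U_{1}\subset G$: the naive pointwise form of Deligne's theorem gives a dense open of $\Gamma_{W}$ rather than of $G$, so one invokes the standard family (relative) variant of the theorem to conclude fiberwise base change for $f^{Z}_{G}$ on a dense open of $G$.
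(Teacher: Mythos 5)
Your proof is correct and follows essentially the same route as the paper's: both rest on Proposition~\ref{prop:key-proposition}, the properness of $X$, and Deligne's generic base change theorem, with the only difference being a harmless reordering of steps. The paper restricts the universal isomorphism $\psi_{!}f^{Z}_{G*}K \simeq f_{G*}\psi^{Z}_{!}K$ to the fiber $\{g\}\times X$ for generic $g$ and then takes $R\Gamma(X,-)$ (using properness to pass to $R\Gamma_c$), whereas you first push forward by $R\mathrm{pr}^{G\times X}_{G*}$ to get the global isomorphism $\mathcal{A}\simeq\mathcal{B}$ on $G$ (using properness of $X$ to identify $R\mathrm{pr}^{G\times X}_{G*}\psi_{!}$ with $\pi^{\vee}_{W!}$) and then take stalks; the two applications of Deligne's theorem match, since your application to $\mathrm{pr}^{G\times Z}_{G}=\mathrm{pr}^{G\times X}_{G}\circ f_{G}$ reduces to the paper's application to $f_{G}$ because the proper factor $\mathrm{pr}^{G\times X}_{G}$ always commutes with base change.
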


\begin{proof}
Proposition~\ref{prop:key-proposition} and Deligne's generic base
change theorem \cite[Chapitre 7, Th\'eor\`eme 1.9 ]{SGA4.5} give a
dense open \(U\subset G\) such that, for all \(g\in U(k)\),
\begin{equation}
\label{eqn:equality-cohomology-with-without-1}
~_{g}\phi_{!}~_{g}f^{Z}_{*}K \simeq f_{*}~_{g}\phi^{Z}_{!}K.
\end{equation}
Since \(X\) is proper over \(k\),
\begin{equation}
\label{eqn:equality-cohomology-with-without-2}
R\Gamma_{c}(gW,~_{g}f^{Z}_{*}K) \simeq R\Gamma(X,~_{g}\phi_{!}~_{g}f_{*}^{Z}K).
\end{equation}
Combining \eqref{eqn:equality-cohomology-with-without-1} and
\eqref{eqn:equality-cohomology-with-without-2} gives the result.
\end{proof}

We conclude with the sign of the Euler characteristic.

\begin{cor}\label{cor:positivity-euler-char}
Assume that the coefficients form a field, and let \(K\) be a perverse
sheaf on \(Z\). There exists a dense open subset \(U\subset G\) such
that, for every \(g\in U(k)\),
\[
(-1)^{\dim X-\dim W}\chi(gW \cap_{f,\phi}Z,{}_{g}\phi^{Z*}K) \geq 0.
\]
\end{cor}

\begin{proof}
By Corollary~\ref{cor:generic-vanishing-general},
\[
(-1)^{\dim(X)-\dim(W)}\chi(Z,~_{g}\phi^{Z}_{!}~_{g}\phi^{Z*}K) \geq 0.
\]
Laumon's comparison \cite{Lau81} gives
\[
\chi(Z,~_{g}\phi^{Z}_{!}~_{g}\phi^{Z*}K) = \chi_{c}(Z,~_{g}\phi^{Z}_{!}~_{g}\phi^{Z*}K).
\]
The identity
\[
R\Gamma_{c}(gW \cap_{f,\phi} Z,{}_{g}\phi^{Z*}K)
\simeq
R\Gamma_{c}(Z,{}_{g}\phi^{Z}_{!}{}_{g}\phi^{Z*}K)
\]
and applying the same comparison again yield
\[
\chi_{c}(Z,~_{g}\phi^{Z}_{!}~_{g}\phi^{Z*}K) = \chi(gW \cap_{f,\phi} Z,~_{g}\phi^{Z*}K),
\]
and the corollary follows.
\end{proof}

\section{Arithmetic corollaries}
\label{sec:arithmetic-corollaries}

In this section we specialize to the case in which the base field is
the algebraic closure of a finite field, all the data are defined
over a common finite subfield, and \(K_0\) is a mixed perverse sheaf
in the sense of \cite[5.1.5]{BBDG18}. The structural results of
Section~\ref{sec:set-up-t-exactness} then admit useful arithmetic
refinements: the trace function of the Radon transform
\(\sR_{W!}\circ f_{*}(K_0)\) is, on a dense open subset of \(G\), the
Frobenius trace on a single cohomology group
(Corollary~\ref{cor:trace-arith}). Under some assumptions on \(K_0\)
at infinity, this trace identity can be bounded using standard
Weil~II apparatus (Corollary~\ref{cor:weil-bound-clean-arith}).

Throughout this section we fix a finite field \(k_0\) of cardinality
\(q\) and characteristic \(p\), an algebraic closure \(k\) of
\(k_0\), and a prime \(\ell\neq p\). We work with coefficients in
\(\overline{\bbQ}_{\ell}\) and use the language of mixed sheaves on
\(k_0\)-schemes from \cite[\S 5.1]{BBDG18}. We assume that the data
\((G, X, W, Z, \phi, f)\) of Section~\ref{sec:set-up-t-exactness}
descend to \(k_0\): we fix \(k_0\)-forms \(G_0, X_0, W_0, Z_0\), a
\(k_0\)-action \(\mu_0:G_0\times X_0\to X_0\), and
\(k_0\)-morphisms \(\phi_0:W_0\to X_0\), \(f_0:Z_0\to X_0\) whose
base change to \(k\) recovers the action and the data
\((G, X, W, Z, \phi, f)\). Let \(K_0\) be a mixed perverse sheaf on
\(Z_0\); we write \(K\) for its pullback to \(Z\).

Throughout the section we abbreviate
\[
a:=\dim G,\qquad x:=\dim X,\qquad b:=\dim W,\qquad
r:=a+b-x,\qquad \delta:=b-x;
\]
in particular \(r\) is the relative dimension of the smooth morphism
\(\pi_W:\Gamma_W\to X\) (Lemma~\ref{lem:structure-U_B}), and
\(r-a=\delta\). As in the rest of the paper,
\(\pi_W^\dagger\) means the untwisted functor \(\pi_W^*[r]\). Thus
the trace identities below are identities for the unnormalised
cohomology groups that occur in the exponential sums.

By Theorem~\ref{thm:radon-t-exact}, the complex
\(\sR_{W!}\circ f_{*}(K)\) is a perverse sheaf on \(G\); since the
construction descends to \(k_0\), it underlies a mixed perverse
sheaf on \(G_0\), which we denote \(\sR_{W!}\circ f_{*}(K_0)\). Its
standard cohomology sheaves
\(\mathcal{H}^{m}(\sR_{W!}\circ f_{*}(K_0))\) are constructible on
\(G_0\), with \(k_0\)-closed supports in \(G_0\).

\subsection{Descent of the good open}
\label{sec:descent-U-arith}

\begin{lem}\label{lem:U-descends-arith}
There exist dense open subsets \(U^\circ_0\subset U_0\subset G_0\)
over \(k_0\), with pullbacks \(U^\circ\subset U\subset G\), satisfying
the following properties; the isomorphisms below underlie morphisms
of mixed complexes over \(U_0\), and hence specialize to
\(\Frob_{g,n}\)-equivariant isomorphisms at every
\(g\in U(\bbF_{q^n})\).
\begin{enumerate}[(i)]
\item For every \(g\in U(k)\), generic base change yields
\[
(\sR_{W!}\circ f_*K)_{\bar g}
   \;\simeq\;
R\Gamma_c\bigl(gW,\,{}_{g}f^{Z}_{*}\,{}_{g}\phi^{Z*}K[r]\bigr).
\]
\item For every \(g\in U(k)\),
Lemma~\ref{lem:equality-cohomology-with-without} gives
\[
R\Gamma_c\bigl(gW,\,{}_{g}f^{Z}_{*}\,{}_{g}\phi^{Z*}K\bigr)
   \;\simeq\;
R\Gamma\bigl(Z,\,{}_{g}\phi^{Z}_{!}\,{}_{g}\phi^{Z*}K\bigr).
\]
\item For every \(g\in U^\circ(k)\),
\(\mathcal H^m(\sR_{W!}\circ f_*K)_{\bar g}=0\) for all \(m\neq -a\).
\end{enumerate}
Concretely, after choosing \(U_0\) to satisfy (i) and (ii), one may
take
\[
U^{\circ}_0 := U_0 \;\setminus\; \overline{\bigcup_{i \geq 1}\bigl(G_{i}(K_0)\cap U_0\bigr)},
\]
where \(G_i(K_0)\subset G_0\) is the \(k_0\)-form of \(G_i(K)\) given
by the support of \(\mathcal H^{i-a}(\sR_{W!}\circ f_*K_0)|_{U_0}\).
This is dense because Corollary~\ref{cor:generic-vanishing-general}(ii)
gives \(\dim(G_i(K)\cap U)\leq a-i<a\) for every \(i\geq 1\).
\end{lem}

\begin{proof}
Apply Deligne's generic base change theorem
\cite[Ch.~7, Th\'eor\`eme~1.9]{SGA4.5}, over \(k_0\), to the
canonical base-change morphism giving the displayed stalk isomorphism
for the mixed sheaf \(\sR_{W!}\circ f_*K_0\). Removing the support of
the cone of this morphism gives a dense open \(U^{(1)}_0\subset G_0\)
on which (i) holds. The same construction applied to the canonical
morphism underlying Lemma~\ref{lem:equality-cohomology-with-without}
gives a dense open \(U^{(2)}_0\subset G_0\) on which (ii) holds. We
take \(U_0=U^{(1)}_0\cap U^{(2)}_0\).

On \(U\), generic base change identifies \(G_i(K)\cap U\) with the
support, inside \(U\), of the constructible sheaf
\(\mathcal H^{i-a}(\sR_{W!}\circ f_*K)|_U\). The latter sheaf
underlies a constructible sheaf on \(U_0\), so its support descends
to a \(k_0\)-constructible subset \(G_i(K_0)\cap U_0\) of \(U_0\)
with \(k_0\)-closed closure. Only finitely many of the
\(\mathcal H^{m}(\sR_{W!}\circ f_*K)\) are non-zero, and by
Corollary~\ref{cor:generic-vanishing-general}(i) the loci with
\(i<0\) are empty on \(U\). Removing from \(U_0\) the closures of
those \(G_i(K_0)\cap U_0\) with \(i\geq 1\) gives an open
\(U^{\circ}_0\) over \(k_0\) on which the only standard cohomology
sheaf of \(\sR_{W!}\circ f_*K_0\) that may be non-zero is
\(\mathcal H^{-a}\), giving (iii).
\end{proof}

We fix such \(k_0\)-models \(U_0, U^{\circ}_0\) (and their pullbacks
\(U, U^{\circ}\)) for the remainder of the section.

\subsection{The trace-function identity}
\label{sec:trace-arith}

For a mixed complex \(\mathcal{F}_0\) on \(G_0\) with pullback
\(\mathcal F\) to \(G\), recall that the \emph{trace function} of
\(\mathcal{F}_0\) at an \(\bbF_{q^{n}}\)-rational point
\(g \in G_0(\bbF_{q^{n}})\) is
\[
t_{\mathcal{F}_0}(g; \bbF_{q^{n}})
   \;:=\; \sum_{m} (-1)^{m}\,\operatorname{Tr}\bigl(\Frob_{g,n} \,\big|\, \mathcal{H}^{m}(\mathcal{F})_{\bar{g}}\bigr),
\]
where \(\Frob_{g,n}\) is the geometric Frobenius acting on the stalk
at the geometric point \(\bar{g}\) above \(g\).

\begin{cor}[Trace-function identity]\label{cor:trace-arith}
Let \(K_0\) be a mixed perverse sheaf on \(Z_0\) and set
\(\mathcal{F}_0 := \sR_{W!}\circ f_{*}(K_0)\). For every
\(g \in U^{\circ}_0(\bbF_{q^{n}})\):
\begin{enumerate}[(a)]
\item The stalk \(\mathcal{F}_{\bar{g}}\) is concentrated in
cohomological degree \(-a\), and
\[
t_{\mathcal{F}_0}(g; \bbF_{q^{n}})
   = (-1)^{a}\,
     \operatorname{Tr}\Bigl(\Frob_{g,n}\,\Big|\,
       \mathrm{H}^{\delta}_{c}\bigl(gW,\;
         {}_{g}f^{Z}_{*}\,{}_{g}\phi^{Z*}K\bigr)\Bigr).
\]
\item The same trace can be computed on \(Z\):
\[
t_{\mathcal{F}_0}(g; \bbF_{q^{n}})
   = (-1)^{a}\,
     \operatorname{Tr}\Bigl(\Frob_{g,n}\,\Big|\,
       \mathrm{H}^{\delta}\bigl(Z,\;
         {}_{g}\phi^{Z}_{!}\,{}_{g}\phi^{Z*}K\bigr)\Bigr).
\]
\end{enumerate}
\end{cor}

\begin{proof}
Part (a). By Lemma~\ref{lem:U-descends-arith}(iii),
\(\mathcal H^m(\mathcal F)_{\bar g}=0\) for \(m\neq -a\), so the trace
sum collapses to
\[
t_{\mathcal{F}_0}(g; \bbF_{q^{n}})
   = (-1)^{-a}\,\operatorname{Tr}\bigl(\Frob_{g,n}\,\big|\,
       \mathcal{H}^{-a}(\mathcal{F})_{\bar{g}}\bigr).
\]
By Lemma~\ref{lem:U-descends-arith}(i),
\(\mathcal F_{\bar g}\simeq
R\Gamma_c\bigl(gW,\,{}_{g}f^{Z}_{*}\,{}_{g}\phi^{Z*}K[r]\bigr)\)
as a complex of \(\Frob_{g,n}\)-modules. Taking \(\mathcal H^{-a}\)
and using \(r-a=\delta\) gives
\[
\mathcal{H}^{-a}(\mathcal{F})_{\bar{g}}
   \;\simeq\; \mathrm{H}^{\delta}_{c}\bigl(gW,\;
       {}_{g}f^{Z}_{*}\,{}_{g}\phi^{Z*}K\bigr)
\]
as \(\Frob_{g,n}\)-modules. Since \((-1)^{-a}=(-1)^a\), part (a)
follows.

Part (b). Combine part (a) with the \(\Frob_{g,n}\)-equivariant
comparison of Lemma~\ref{lem:U-descends-arith}(ii) (after taking
\(\mathrm H^\delta\)).
\end{proof}

\subsection{A Lang--Weil estimate for the exceptional locus}
\label{sec:sparsity-arith}

\begin{cor}\label{cor:sparsity-arith}
There exist constants \(C_{i} \geq 0\) (for \(i \geq 1\)) and
\(C \geq 0\), depending on the data
\((G_0, X_0, W_0, Z_0, \phi_0, f_0, K_0)\) but independent of \(n\),
such that:
\begin{enumerate}[(i)]
\item for every \(i \geq 1\),
\[
\#\bigl((G_{i}(K_0) \cap U_0)(\bbF_{q^{n}})\bigr)
   \;\leq\; C_{i}\,(q^{n})^{a - i};
\]
\item the complement
\[
E(K_0) \;:=\; G_0 \setminus U^{\circ}_0 \;\subseteq\; G_0
\]
of the trace-function open \(U^{\circ}_0\) from
Lemma~\ref{lem:U-descends-arith} satisfies
\[
\#\bigl(E(K_0)(\bbF_{q^{n}})\bigr)
   \;\leq\; C\,(q^{n})^{a - 1}.
\]
\end{enumerate}
\end{cor}

\begin{proof}
(i) By Corollary~\ref{cor:generic-vanishing-general}(ii), the
intersection \(G_i(K)\cap U\) is constructible of dimension at most
\(a-i\). Its \(k_0\)-structure comes from
Lemma~\ref{lem:U-descends-arith}: on \(U_0\), generic base change
identifies it with the support of
\(\mathcal H^{i-a}(\sR_{W!}\circ f_*K_0)|_{U_0}\).

Stratify \(G_{i}(K_0) \cap U_0\) as a finite disjoint union of
locally closed \(k_0\)-subvarieties of \(G_0\), each of dimension at
most \(a-i\). The Lang--Weil estimate
\cite[Theorem~1]{LangWeil54}, applied to the closure of each stratum
\(S_{\alpha}\), yields
\[
\#S_{\alpha}(\bbF_{q^{n}})
   \;\leq\; \#\overline{S_{\alpha}}(\bbF_{q^{n}})
   \;\leq\; c_{\alpha}\,(q^{n})^{\dim S_{\alpha}}
   \;\leq\; c_{\alpha}\,(q^{n})^{a - i},
\]
with \(c_{\alpha}\) depending only on the geometry of
\(\overline{S_{\alpha}}\). Summing over the finitely many strata
gives the bound with \(C_{i} = \sum_{\alpha} c_{\alpha}\).

(ii) The complement \(E(K_0)=G_0 \setminus U^{\circ}_0\) is, by the
construction of \(U^{\circ}_0\) in Lemma~\ref{lem:U-descends-arith},
contained in
\((G_0 \setminus U_0) \cup \bigcup_{i \geq 1} \overline{G_{i}(K_0) \cap U_0}\).
The first piece \(G_0 \setminus U_0\) is a proper \(k_0\)-closed
subset of \(G_0\) (since \(U_0\) is dense open), hence of dimension
at most \(a-1\). The second piece is a finite union (only finitely
many \(G_{i}(K_0)\) are non-empty, since
\(\mathrm{H}^{i}_{c}(gW, \cdot)\) vanishes for \(|i|\) sufficiently
large) of \(k_0\)-closed subsets of dimension at most \(a-1\).
Lang--Weil applied directly to \(G_0\setminus U_0\) and to these
finitely many closures gives the stated bound.
\end{proof}

\begin{rem}\label{rem:sparsity-honest-arith}
For \(i \geq 2\), Corollary~\ref{cor:sparsity-arith}(i) gives the
sharper exponent \(a-i\) on the part of \(G_{i}(K_0)\) that lies
inside the good open \(U_0\). The intersection is unavoidable: a
priori \(G_{i}(K_0) \setminus U_0\) can fill all of the
codimension-\(1\) complement \(G_0 \setminus U_0\), in which case no
estimate sharper than \(O\bigl((q^{n})^{a-1}\bigr)\) is available for
the full \(G_{i}(K_0)\).
\end{rem}

\subsection{A Weil bound}
\label{sec:closing-arith}

We close by combining the trace identity of
Corollary~\ref{cor:trace-arith} with Deligne's Weil~II weight
bounds \cite{Del80} to obtain a uniform bound on the trace
function of \(\sR_{W!}\circ f_*K_0\). Beyond the standing
assumption that \(K_0\) is a mixed perverse sheaf, this requires
the forget-supports map \(Rf_{0!}K_0\to Rf_{0*}K_0\) to be an isomorphism (see
Remark~\ref{rem:clean-exponential-sums-arith} for the motivation).

\begin{cor}\label{cor:weil-bound-clean-arith}
Assume, in addition, that \(K_0\) is mixed of weights \(\leq w\), and
that the natural map
\[
Rf_{0!}K_0 \longrightarrow Rf_{0*}K_0
\]
is an isomorphism. Then for every \(g\in U^{\circ}_0(\bbF_{q^n})\),
\[
\bigl|\,t_{\sR_{W!}\circ f_{*}(K_0)}(g; \bbF_{q^{n}})\,\bigr|
   \;\leq\; N(g)\,(q^n)^{(w+\delta)/2},
\]
where
\[
N(g):=
\dim \mathrm{H}^{\delta}_{c}
   \bigl(gW,\,{}_{g}f^Z_*\,{}_{g}\phi^{Z*}K\bigr).
\]
Equivalently, by the generic vanishing theorem,
\[
N(g)
  = (-1)^{x-b}\,
     \chi\bigl(gW\cap_{f,\phi}Z,\,{}_{g}\phi^{Z*}K\bigr)
\]
is non-negative. After replacing \(U^{\circ}_0\) by a smaller dense
open, \(N(g)\) is locally constant in \(g\).
\end{cor}

\begin{proof}
The only point not contained in Corollary~\ref{cor:trace-arith} is
the weight bound. Since \(K_0\) is mixed of weights \(\leq w\),
Deligne's stability theorem for \(Rf_{0!}\)
\cite[3.3.1, 6.2.3]{Del80} gives that \(Rf_{0!}K_0\) is mixed of
weights \(\leq w\); the assumed isomorphism
\(Rf_{0!}K_0\simeq Rf_{0*}K_0\) transports this bound to
\(Rf_{0*}K_0\).

Recall the definition
\[
\sR_{W!}=\mathrm{pr}_{G*}\psi_!\pi_W^\dagger,
\qquad
\pi_W^\dagger = \pi_W^*[r],
\]
where \(r=a+b-x\) is the relative dimension of the smooth morphism
\(\pi_W:\Gamma_W\to X\). Smooth pullback \(\pi_W^*\) preserves the
weight bound on a complex, while the cohomological shift \([r]\)
raises the bound from \(\leq w\) to \(\leq w+r\). The functor
\(\psi_!\) preserves weights \(\leq w+r\)
(Weil~II \cite[3.3.1]{Del80}), and so does
\(\mathrm{pr}_{G*}=\mathrm{pr}_{G!}\) (proper, since \(X\) is proper).
Hence \(\sR_{W!}\circ f_{*}(K_0)\) is mixed of weights \(\leq w+r\)
as a complex on \(G_0\). By \cite[5.1.14(i)]{BBDG18}, its \((-a)\)-th
standard cohomology sheaf
\(\mathcal H^{-a}(\sR_{W!}\circ f_{*}(K_0))\) is mixed of weights
\(\leq (w+r)+(-a)=w+\delta\).

For \(g\in U^{\circ}_0(\bbF_{q^n})\), Corollary~\ref{cor:trace-arith}
identifies the stalk of this cohomology sheaf at \(\bar g\) with
\(\mathrm H^\delta_c(gW,\,{}_{g}f^Z_*\,{}_{g}\phi^{Z*}K)\). All
eigenvalues of \(\Frob_{g,n}\) on this group therefore have complex
absolute value at most \((q^n)^{(w+\delta)/2}\), and the displayed
estimate follows by summing the absolute values of the eigenvalues.
The equality with the signed Euler characteristic follows from the
concentration statement and
Corollary~\ref{cor:positivity-euler-char}.
\end{proof}

\begin{rem}\label{rem:clean-exponential-sums-arith}
By Zariski's main theorem, \(f_0\) factors as
\[
Z_0 \xhookrightarrow{j_0} \overline{Z_0} \xrightarrow{\overline{f_0}} X_0
\]
with \(j_0\) open and \(\overline{f_0}\) finite. Since \(\overline{f_0}\)
is finite, \(R\overline{f_{0,!}} = R\overline{f_{0,*}}\), so
\(Rj_{0,!}K_0\simeq Rj_{0,*}K_0\) on \(\overline{Z_0}\) suffices
for the hypothesis of Corollary~\ref{cor:weil-bound-clean-arith}.
Suppose furthermore that both \(j_{0,!}K_0\) and
\(j_{0,*}K_0\) are perverse on \(\overline{Z_0}\). The middle extension \(j_{0,!*}K_0\) is by
definition the image of the canonical map
\(j_{0,!}K_0\to j_{0,*}K_0\), so the hypothesis becomes
\[
j_{0,!}K_0\xrightarrow{\sim} j_{0,!*}K_0\xrightarrow{\sim} j_{0,*}K_0;
\]
i.e.\ \(K_0\) admits no boundary contribution along
\(\overline{Z_0}\setminus Z_0\), as in the motivating example
\cite[Example~2.2]{Kat93} of a Kummer sheaf on \(\mathbb A^1\).
\end{rem}

\bibliographystyle{alpha-custom}
\bibliography{references.bib}
\end{document}